\documentclass{amsart}
\usepackage{graphicx} 
\usepackage{amsmath,amssymb,amsthm,mathtools,xpatch}
\usepackage{hyperref}
\usepackage{xcolor}
\usepackage{a4wide}
\usepackage{latexsym}

\newtheorem{theorem}{Theorem}[section]

\newtheorem{remark}[theorem]{Remark}
\newtheorem{definition}[theorem]{Definition}

\newtheorem{lemma}[theorem]{Lemma}

\makeatletter
\renewcommand{\email}[1]{\thanks{Email addresses: \texttt{#1}}}
\makeatother

\begin{document}

\title[Improved Lipschitz Retraction Bounds]{Improved Bounds For The Lipschitz Retraction Constant In Classical Banach Spaces}
\author[Shriram Mahesh]{Shriram Mahesh \\ St John's College, University of Cambridge} 
\begin{abstract}
    We improve the upper and lower bounds for the smallest Lipschitz constant $K$ for which there exists a $K$-Lipschitz retraction from the unit ball to the unit sphere of the infinite-dimensional Banach space $X$, for several classical spaces $X$.
\end{abstract}
\email{sm2803@cam.ac.uk, shriram.mahesh@gmail.com}
\date{}
\subjclass[2020]{Primary: 46T20, Secondary: 46B45}
\keywords{Lipschitz Constant, Continuous Retraction, Sequence Spaces}

\maketitle

\section{Introduction}
\noindent Let $X$ be a Banach space, and let $B_X$ and $S_X$ denote the unit ball of $X$ and the unit sphere of $X$ respectively. In the case where $X$ is finite-dimensional, one of the equivalent forms of the Brouwer fixed point theorem states that there exists no continuous retraction $R : B_X \to S_X$ (where a retraction is a function fixing all points in $S_X$).
\newline
\newline
The situation is different in infinite dimensions, as there exist Lipschitz retractions from the ball to the sphere \cite{benyamini1983spheres}. In fact there exists a universal constant $C > 0$ such that for all infinite-dimensional Banach spaces $X$, there exists a $C$-Lipschitz retraction $R: B_X \to S_X$ (i.e. a retraction with $||R(x) - R(y)|| \leq C||x-y||$ $\forall x, y \in B_X$). See \cite{goebel1990topics} for a proof of this result. This leads us to define the natural quantity 
\[
k_0(X) := \inf\{k>0: \exists \hspace{0.3em} k\text{-Lipschitz retraction in X} \}
\]
Surprisingly, $k_0(X)$ is not known exactly for any Banach space $X$. However, many upper and lower bounds have been proven. For instance, it is known that:
\begin{align*}
& k_0(X) \geq 3 \qquad \text{for all Banach spaces $X$ (see \cite{goebel1990topics})}\\
& k_0(X) > 3 \qquad \text{for all uniformly convex Banach spaces $X$ (see \cite{piasecki2011retracting})}\\
& 4 \leq k_0(l_1) \leq 8 \qquad \text{(see \cite{annoni2007upper} and the third reference therein)}\\
& 4.58 \leq k_0(l_2) \leq 28.99 \qquad \text{(see \cite{casini2017minimal} and \cite{baronti2003retraction})}\\
& k_0(l_\infty) \leq 12 + 2\sqrt{30} \qquad \text{(see \cite{casini2017minimal})}\\
& k_0(X) \leq 4(2 + \sqrt{3}) \qquad \text{for $X \in \{c, c_0, C[0,1]\}$ (see \cite{piasecki2011retracting})}   
\end{align*}
The cited references also contain other results along the same lines. Interestingly, no lower bound for $k_0(l_p)$ in the case $1 < p < 2$ is recorded in the literature beyond the general bound for uniformly convex spaces $k_0(l_p) > 3$. In this work, we improve the lower bounds for $k_0(l_p)$ for small $p$ (including $p=1$) and $k_0(c_0)$. We also improve the upper bounds for $k_0(c_0)$, $k_0(l_1)$ and $k_0(l_\infty)$.
\newpage

\section{Improved Lower Bounds}
\noindent In this section, we first prove an improvement to the best known lower bound for $k_0(l_p)$ when $p$ is small. We will require the following (very easy) lemma:

\begin{lemma}
Let $1 \leq p < \infty$. Let $(e_n)_{n=1}^\infty$ denote the standard basis of $l_p$. Let $A \subset \mathbb{N}$, and define $V := span\{e_k: k \in A\}$, $V^\perp := span\{e_k: k \not\in A\}$. Let $P_V$, $P_{V^\perp}$ denote the projections from $l_p$ onto $V$ and $V^\perp$ respectively. Suppose $x, y \in S_{l_p}$ are such that $||P_Vx||^p = a$ (hence $||P_{V^\perp}x||^p = 1 - a$) and $||P_Vy||^p = b$ (hence $||P_{V^\perp}x||^p = 1 - b$). Then $||x - y|| \geq (|a^\frac{1}{p} - b^\frac{1}{p}|^p + |(1-a)^\frac{1}{p} - (1-b)^\frac{1}{p}|^p)^\frac{1}{p}$.
\end{lemma}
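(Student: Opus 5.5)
The plan is to split the norm of $x-y$ along the decomposition $l_p = V \oplus V^\perp$ and then apply the reverse triangle inequality in each block. Because the sets $A$ and $\mathbb{N}\setminus A$ are disjoint and $P_V$, $P_{V^\perp}$ act by restricting coordinates, the $p$-th power of the norm is additive across the two pieces. That gives
\[
\|x-y\|^p = \|P_V x - P_V y\|^p + \|P_{V^\perp}x - P_{V^\perp}y\|^p .
\]
The same fact justifies the parenthetical claims in the statement: $\|P_V x\|^p + \|P_{V^\perp}x\|^p = \|x\|^p = 1$, and likewise for $y$.

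Next, I apply the reverse triangle inequality in each block:
\[
\|P_Vx - P_Vy\| \geq \bigl|\|P_Vx\| - \|P_Vy\|\bigr| = |a^{1/p} - b^{1/p}|,
\]
and similarly
\[
\|P_{V^\perp}x - P_{V^\perp}y\| \geq |(1-a)^{1/p} - (1-b)^{1/p}|.
\]
Both sides of each inequality are nonnegative, so raising them to the $p$-th power preserves them. Summing the two bounds and taking $p$-th roots, which is monotone, yields the stated inequality.

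There is no real obstacle here. The only point that needs explicit justification is the additivity of $\|\cdot\|^p$ over disjointly supported vectors in $l_p$, and it follows immediately from the definition $\|z\|^p = \sum_k |z_k|^p$ by splitting the sum over $A$ and its complement. I would also note that the second hypothesis in the statement should read $\|P_{V^\perp}y\|^p = 1-b$, which is how it is used in the argument above.
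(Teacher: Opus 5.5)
Your proof is correct and is essentially the paper's argument. The paper also splits $\|x-y\|^p$ into the sums over $A$ and its complement, applies Minkowski's inequality (the reverse triangle inequality) in each block, and takes $p$-th roots; you are also right that the second hypothesis should read $\|P_{V^\perp}y\|^p = 1-b$.
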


\begin{proof}
We have:
\begin{align*}
||x-y||^p &= \sum_{i \in A} |x_i - y_i|^p + \sum_{i \not\in A} |x_i - y_i|^p \\ &\geq \left|(\sum_{i \in A} |x_i|^p)^{\frac{1}{p}} - (\sum_{i \in A}|y_i|^p)^{\frac{1}{p}}\right|^p + \left|(\sum_{i \not\in A} |x_i|^p)^{\frac{1}{p}} - (\sum_{i \not\in A}|y_i|^p)^{\frac{1}{p}}\right|^p \\ &= |a^\frac{1}{p} - b^\frac{1}{p}|^p + |(1-a)^\frac{1}{p} - (1-b)^\frac{1}{p}|^p,
\end{align*}
where the second inequality follows from Minkowski's inequality applied twice. Taking $p^{th}$ roots gives us the desired result.
\end{proof}

\begin{theorem}
Let $1 \leq p < \infty$. Then $k_0(l_p) \geq \kappa_p$, for a constant $\kappa_p$ to be specified, with $\kappa_p > 3$ for $p \leq 1.6$. Also, we have in particular that $k_0(l_1) \geq 3 + \sqrt{5} \approx 5.236$.
\end{theorem}

\begin{proof}
Let $R: B_{l_p} \to S_{l_p}$ be a $K$-Lipschitz retraction and fix $\epsilon > 0$. We may find a finite-dimensional subspace $E$ of $l_p$ which is spanned by a finite subset of the standard basis such that $||P_ER(0)||^p \geq 1 - \epsilon$ (where $P_E$ is, as in the lemma, the (continuous) projection onto $E$). Say $E = span\{e_k: k \in I\}$, where $I \subset \mathbb{N}$ is some finite set. Consider the map $T: B_E \to B_E$ mapping $x \mapsto P_ER(x)$. This is continuous, by continuity of $R$ and $P_E$, and if $x \in S_E$ then, because $R$ fixes unit vectors, $T(x) = x$. Hence the map $T$ must have a root, else $x \mapsto \frac{T(x)}{||T(x)||}$ would be a continuous retraction from $B_E \to S_E$, contradicting Brouwer's fixed point theorem (because $E$ is finite-dimensional). Call this root $x_0$, and let $r := ||x_0||$.
\newline
\newline
Observe that we thus have $||P_ER(x_0)||^p = 0$ and $||P_ER(0)||^p \geq 1-\epsilon$, so Lemma $2.1$, combined with the Lipschitz condition, yields
\[
\eta_\epsilon := ( (1-\epsilon) + (1 - \epsilon^\frac{1}{p})^p)^\frac{1}{p} \leq ||R(x_0) - R(0)|| \leq K||x_0|| = Kr,
\]
so $r \geq \frac{\eta_\epsilon}{K}$.
\newline
\newline
Now pick another finite $J \subset \mathbb{N} \backslash I$ such that, for $F := span\{e_k: k \in J\}$, $||P_FR(x_0)||^p \geq 1 - \epsilon$. Consider the map $S: B_F \to B_F$ which maps $y \mapsto P_FR(x_0 + (1-r^p)^{\frac{1}{p}}y)$. Once more, this map is continuous, and if $y \in S_F$, then
\[
||x_0 + (1-r^p)^{\frac{1}{p}}y||^p = ||x_0||^p + (1-r^p)||y||^p = r^p + 1 - r^p = 1,
\]
where the first equality follows from the fact that $x_0 \in E$, $y_0 \in F$ and $E \cap F = \{0\}$. Thus for $y \in S_F$, we have $S(y) = (1-r^p)^{\frac{1}{p}}y$, again because $R$ fixes unit vectors. So, by the same reasoning as before, $S$ must have a root $y_0$, else $y \mapsto \frac{S(y)}{||S(y)||}$ would be a continuous retraction in $F$, contradicting Brouwer. Defining $z := x_0 + (1-r^p)^\frac{1}{p}y_0$, we thus have $||P_FR(z)||^p = 0$ and $||P_FR(x_0)||^p \geq 1-\epsilon$. By Lemma $2.1$ and the Lipschitz condition, 
\begin{align*}
\eta_\epsilon \leq ||R(z) - R(x_0)|| \leq K||(1-r^p)^\frac{1}{p}y_0||,
\end{align*}
hence $||y_0|| \geq \frac{\eta_\epsilon}{K (1-r^p)^\frac{1}{p}}$. We will proceed by finding an upper bound for $||y_0||$ and comparing our bounds. To do this, write $||y_0|| = 1 - \delta$. Then $\frac{y_0}{1-\delta} \in S_F$, so $z_\delta := x_0 + (1-r^p)^\frac{1}{p}\frac{y_0}{1-\delta} \in S_{l_p}$. So $R$ fixes this quantity, meaning that $||P_FR(x_0 + (1-r^p)^\frac{1}{p}\frac{y_0}{1-\delta})||^p  = 1-r^p$. Since $||P_FR(x_0 + (1-r^p)^{\frac{1}{p}}y_0)||^p = 0$, we once more apply Lemma $2.1$ and the Lipschitz condition to obtain: 
\begin{align*}
(1 - r^p + (1-r)^p)^\frac{1}{p} \leq ||R(z_\delta) - R(z)|| &\leq K \left|\left|(1-r^p)^\frac{1}{p}\frac{y_0}{1-\delta} - (1-r^p)^\frac{1}{p}y_0\right|\right| \\ &=K (1-r^p)^{\frac{1}{p}} \delta,
\end{align*}
hence $\delta \geq (\frac{1 - r^p + (1-r)^p}{K^p (1-r^p)})^\frac{1}{p}$, and so $||y_0|| = 1 - \delta \leq 1 - (\frac{1 - r^p + (1-r)^p}{K^p (1-r^p)})^\frac{1}{p}$. Comparing bounds for $||y_0||$, and using the previously derived lower bound for $r$, we obtain that:
\begin{align*}
\frac{\eta_\epsilon}{K} &\leq (1-r^p)^{\frac{1}{p}}\left(1 - \left(\frac{1 - r^p + (1-r)^p}{K^p (1-r^p)}\right)^\frac{1}{p}\right) \\ &\leq \left(1-\frac{\eta_\epsilon^p}{K^p}\right)^{\frac{1}{p}}\left(1 - \left(\frac{1 - r^p + (1-r)^p}{K^p (1-r^p)}\right)^\frac{1}{p}\right) \qquad \qquad (*)
\end{align*}
In the case $p = 1$, upon taking $\epsilon \to 0$ (as $\epsilon$ was arbitrary), and noting that $\eta_\epsilon \to 2$ in the limit, we obtain:
\[
\frac{2}{K} \leq \left(1 - \frac{2}{K}\right)^2
\]
Solving this yields that either $\frac{2}{K} \geq \frac{3 + \sqrt{5}}{2}$ or $\frac{2}{K} \leq \frac{3 - \sqrt{5}}{2}$. The general Banach space bound $k_0(X) \geq 3$ gives $K \geq 3$, hence $\frac{2}{K} \leq \frac{2}{3}$, so only the second option is possible, and rearranging this option yields $K \geq 3 + \sqrt{5}$. Since this applies for all Lipschitz retractions in $l_1$, we obtain the desired $k_0(l_1) \geq 3 + \sqrt{5}$.
\newline
\newline
For $p >1$, the expression in the second pair of brackets doesn't simplify as nicely, so we just bound it above by $(1 - \frac{1}{K})$. Then taking $\epsilon \to 0$ once more,
\[
\frac{2^\frac{1}{p}}{K} \leq \left(1 - \frac{2}{K^p}\right)^\frac{1}{p}\left(1 - \frac{1}{K}\right)
\]
Analogously to the case $p=1$, this yields $K \geq \kappa_p$, where $\kappa_p$ is defined as the largest root to this equation. This bound beats $3$ for $p < p^*$, where $p^*$ is such that:
\[
\frac{2^\frac{1}{p}}{3} = \left(1 - \frac{2}{3^p}\right)^\frac{1}{p}\left(1 - \frac{1}{3}\right)
\]
Solving this yields $p^* = 1.60552...$, and the proof is complete.
\end{proof}

\noindent Next, we prove an improved lower bound for $k_0(c_0)$. We will require a definition and a lemma:

\begin{definition}
    Let $\gamma: [0, 1] \to S_{c_0}$ be a continuous function (i.e. a curve). Then we define the \textbf{length} of $\gamma$ to be:
    \[
    L(\gamma) := \sup_{\substack{n \in \mathbb{N} \\ 0 = t_0 < t_1 < \cdots < t_n = 1}} \sum_{i = 1}^n ||\gamma(t_i) - \gamma(t_{i-1})||
    \]
\end{definition}

\begin{lemma}
Let $\gamma: [0, 1] \to S_{c_0}$ be a continuous curve, and suppose that there exists a subspace $F \subset c_0$, spanned by a subset of the standard basis, such that:
\begin{align*}
    &||P_{F^\perp}\gamma(0)|| = 1, \quad ||P_F\gamma(0)|| = a < 1 \\
    &||P_{F^\perp}\gamma(t^*)|| = 0, \quad ||P_F\gamma(t^*)|| = 1 \\
    &||P_{F^\perp}\gamma(1)|| = 1, \quad ||P_F\gamma(1)|| = 0
\end{align*}
for some $t^* \in (0, 1)$. Then $L(\gamma) \geq 4-a$.
\end{lemma}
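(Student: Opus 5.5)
The plan is to pass to two scalar functions along the curve and bound the length of each. For $t \in [0,1]$ set
\[
f(t) := ||P_F\gamma(t)||, \qquad g(t) := ||P_{F^\perp}\gamma(t)||.
\]
In $c_0$ we have $||x|| = \max(||P_Fx||, ||P_{F^\perp}x||)$, and $||P_F||, ||P_{F^\perp}|| \leq 1$. The reverse triangle inequality then gives, for all $s,t$,
\[
||\gamma(t)-\gamma(s)|| \geq \max\big(|f(t)-f(s)|,\ |g(t)-g(s)|\big).
\]
Since $\gamma(t) \in S_{c_0}$, we also have $\max(f(t),g(t)) = 1$ for every $t$. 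The first point is that on each interval where $f<1$ we have $g \equiv 1$, and on each interval where $g<1$ we have $f \equiv 1$. So in a partition we can sum $|\Delta f|$ on some subintervals and $|\Delta g|$ on others.

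\textbf{The segment $[0,t^*]$.} We have $f(0)=a$ and $f(t^*)=1$, while $g(0)=1$ and $g(t^*)=0$. Let $t_1 := \inf\{t \geq 0 : f(t)=1\}$, which lies in $(0,t^*]$; by continuity $f(t_1)=1$. The chord from $0$ to $t_1$ has length at least $|f(t_1)-f(0)| = 1-a$. The chord from $t_1$ to $t^*$ has length at least $|g(t^*)-g(t_1)|$. Here $g(t_1)$ could be less than $1$, so I will avoid that by taking $t_1' := \sup\{t \leq t^* : g(t)=1\}$ instead.

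On $[t_1', t^*]$ we have $g<1$ except at $t_1'$, hence $f \equiv 1$ there. Before $t_1'$ the function $f$ must travel from $a$ up to $f(t_1')=1$, since $g(t_1')=1$ and $\max(f,g)=1$ alone does not force $f(t_1')=1$. Rather than pin this down, I will argue more simply.

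\textbf{Direct partition.} Choose the points $0 < t_1 < t^* < t_2 < 1$ as follows:
\begin{itemize}
\item $t_1 := \sup\{t < t^* : g(t) = 1\}$, so $g(t_1)=1$ and $g<1$ on $(t_1,t^*]$, hence $f=1$ there and, by continuity, $f(t_1)=1$;
\item $t_2 := \inf\{t > t^* : g(t)=1\}$, and similarly $f(t_2)=1$ and $g(t_2)=1$.
\end{itemize}
Using $\gamma(t_1)$, $\gamma(t^*)$, $\gamma(t_2)$, and the endpoints, the polygonal sum is bounded below term by term:
\[
\begin{aligned}
&\text{from } 0 \text{ to } t_1: && \geq f(t_1)-f(0) = 1-a,\\
&\text{from } t_1 \text{ to } t^*: && \geq g(t_1)-g(t^*) = 1,\\
&\text{from } t^* \text{ to } t_2: && \geq g(t_2)-g(t^*) = 1,\\
&\text{from } t_2 \text{ to } 1: && \geq f(t_2)-f(1) = 1.
\end{aligned}
\]
The total is at least $4-a$. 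Since $L(\gamma)$ is the supremum over partitions, $L(\gamma) \geq 4-a$.

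\textbf{Main obstacle.} The only subtle step is justifying that $f(t_1) = f(t_2) = 1$. This follows from continuity and the fact that $f=1$ wherever $g<1$; the points $t_1, t_2$ are well defined because $g(0)=g(1)=1$. One must also note that $t_1 < t^* < t_2$, which holds since $g(t^*)=0$.
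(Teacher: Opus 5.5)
Your final direct-partition argument is correct and is essentially the paper's proof. Both use the same four chords $0<t_1<t^*<t_2<1$ through points where $\|P_F\gamma\|$ and $\|P_{F^\perp}\gamma\|$ both equal $1$, and bound each chord below by the change in one projected norm. The only difference is where you place $t_1,t_2$: you take the last/first times $g=1$ around $t^*$, while the paper takes $t^{**}=\inf\{t: f(t)=1\}$ and argues symmetrically for $t^{***}$. The worry in your abandoned paragraph was therefore unfounded, since $g\equiv 1$ on $[0,t^{**})$ forces $g(t^{**})=1$.
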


\begin{proof}
Define $t^{**} := \inf\{t: P_F\gamma(t) = 1\} \in [0, t^*]$. For $t < t^{**}$, $||P_F\gamma(t)|| < 1$, so because $\gamma$ has codomain $S_{c_0}$, $||P_{F^\perp}\gamma(t)||=1$. By continuity, we have $||P_F\gamma(t^{**})|| = ||P_{F^\perp}\gamma(t^{**})|| = 1$. Similarly, we may find $t^{***} \in [t^*, 1]$ such that $||P_F\gamma(t^{***})|| = ||P_{F^\perp}\gamma(t^{***})|| = 1$. Since $||a - b|| \geq \max\{||P_Fa - P_Fb||, ||P_{F^\perp}a - P_{F^\perp}b||\}$ $\forall a, b \in c_0$, we thus have:
\begin{align*}
    L(\gamma) &\geq ||\gamma(t^{**}) - \gamma(0)|| + ||\gamma(t^*) - \gamma(t^{**})|| + ||\gamma(t^{***}) - \gamma(t^*)|| + ||\gamma(1) - \gamma(t^{***})|| \\ &\geq ||P_F\gamma(t^{**}) - P_F\gamma(0)|| + ||P_{F^\perp}\gamma(t^*) - P_{F^\perp}\gamma(t^{**})|| + \\ &||P_{F^\perp}\gamma(t^{***}) - P_{F^\perp}\gamma(t^*)|| + ||P_F\gamma(1) - P_F\gamma(t^{***})|| \\&\geq 1-a + 1 + 1 + 1 \\ &= 4-a,
\end{align*}
as desired.
\end{proof}

\begin{theorem}
$k_0(c_0) \geq 4$.
\end{theorem}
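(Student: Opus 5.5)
The plan is to argue by contradiction. Suppose $R: B_{c_0} \to S_{c_0}$ is a $K$-Lipschitz retraction. For a rectifiable path $\sigma$ in $B_{c_0}$, the image curve $\gamma := R\circ\sigma$ lies in $S_{c_0}$ and satisfies $L(\gamma) \leq K\,L(\sigma)$; this follows directly from the definition of length, since $R$ is $K$-Lipschitz on each pair $\sigma(t_{i-1}),\sigma(t_i)$. So it suffices to find such a $\sigma$ with $L(\sigma)\leq 1+O(\epsilon)$ whose image satisfies the hypotheses of Lemma~2.3 with $a = O(\epsilon)$. Lemma~2.3 then gives $4-O(\epsilon) \leq K(1+O(\epsilon))$, and letting $\epsilon\to 0$ yields $K\geq 4$.

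\textbf{Choosing the subspace and the endpoints.} Fix $\epsilon>0$ and a coordinate, say $e_1$. Pick a finite set $I\subset\mathbb{N}\setminus\{1\}$ large enough that $R(0)$, and (by continuity and compactness of the relevant finite-dimensional pieces) the values of $R$ on the points we will use, are within $\epsilon$ in norm of their restrictions to $\{1\}\cup I$. Put $F:=\mathrm{span}\{e_k : k\in I\}$. I would use endpoints on the sphere that $R$ fixes. Take $\sigma(1)$ to be a unit vector supported off $F$ (for instance $e_1$), so that $\|P_{F^\perp}\gamma(1)\|=1$ and $P_F\gamma(1)=0$. Take $\sigma(0)$ to be a unit vector with $P_{F^\perp}$-part of norm $1$ and $P_F$-part of norm $a$ small. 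The sup norm is what makes such points cheap: $e_1+y$ with $\|y\|\leq 1$, $y\in F$, is always a unit vector.

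\textbf{The middle condition via Brouwer.} As in the proof of Theorem~2.2, I would use Brouwer to produce an interior point $z$ with $P_{F^\perp}R(z)$ (approximately) $0$. Since $R(z)$ is a unit vector, this forces $\|P_F R(z)\|\approx 1$, which is the condition at $t^*$. Concretely, I would consider a finite-dimensional ball $B$ spanned by $e_1$ and finitely many coordinates outside $F$. On it I would look at the map $x\mapsto P_{F^\perp}R(x)$, truncated to finitely many coordinates at cost $\epsilon$. On the boundary of $B$ this map is the identity, so it must vanish somewhere, else normalising it would give a retraction of $B$ onto its sphere. The path $\sigma$ is then taken to be the polygonal path $\sigma(0)\to z\to\sigma(1)$.

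\textbf{Main obstacle.} The difficulty is the length bookkeeping. The Brouwer zero $z$ need not lie on a short segment between the two endpoints, and a naive choice (such as $\pm e_1$ through $0$) gives a path of length $2$ and only $K\geq 2$. The crux is to exploit the freedom in choosing the endpoints after locating $z$. In $c_0$, every point of the form $z+w$ with $w$ having a large entry in a fresh coordinate lies on the sphere. So I would first locate $z$ and then take the endpoints to be $z\pm$ suitable vectors in fresh coordinates, arranged so that the total path length is at most $1+O(\epsilon)$ while the $P_F$- and $P_{F^\perp}$-norms of the endpoints still satisfy the hypotheses of Lemma~2.3. If this cannot be achieved with a single pair, I would apply Lemma~2.3 to the two halves of the path separately, each contributing length at least $2$ in the image, and balance their preimage lengths.
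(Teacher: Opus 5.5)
Your reduction is the right framework: push a short path $\sigma$ through $R$ and apply the length lemma (Lemma~2.4 in the paper). But the step you flag as the main obstacle is a genuine gap, and the fixes you suggest do not close it.

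With both endpoints $\sigma(0),\sigma(1)$ on the sphere and $z$ interior, $\|\sigma(i)-z\|\ge 1-\|z\|$, so $L(\sigma)\ge 2(1-\|z\|)$. Getting $L(\sigma)\le 1+O(\epsilon)$ would therefore need $\|z\|\ge \tfrac12-O(\epsilon)$. Your Brouwer step gives no lower bound on $\|z\|$ at all. Nothing prevents $R(0)$ from being supported in $F$, in which case $z=0$ is itself a zero of your map and $L(\sigma)=2$.

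Endpoints $z\pm w$ with $w$ in fresh coordinates are no better. For disjointly supported vectors $\|z+w\|_\infty=\max\{\|z\|,\|w\|\}$, so $z+w\in S_{c_0}$ forces $\|w\|=1$. Each leg then has length exactly $1$, and you only get $K\ge 2$. The ``balance the two halves'' fallback fails for the same reason: both halves run from $z$ to the sphere, so their preimage lengths need not sum to $1$.

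The missing idea is to start the path at the origin rather than at a fixed point of the sphere, and to choose the Brouwer block so that it captures $R(0)$. The paper's construction is:
\begin{itemize}
\item Pick $N$ with $\|P_NR(0)\|=1$ and $\|Q_NR(0)\|<\epsilon$.
\item Apply Brouwer on $B_{E_N}$ to get $x_0$ with $P_NR(x_0)=0$ exactly.
\item Take the radial segment $t\mapsto t\,x_0/\|x_0\|$, $t\in[0,1]$. It has length exactly $1$, passes through $x_0$ at $t=\|x_0\|$, and ends at the fixed point $x_0/\|x_0\|\in S_{E_N}$.
\item Set $F=\overline{\mathrm{span}}\{e_k:k>N\}$, so $P_F=Q_N$ and $P_{F^\perp}=P_N$, and no truncation is needed.
\end{itemize}
The image curve then satisfies the lemma with $a<\epsilon$, whence $4-\epsilon\le L(\gamma)\le K$.

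This is the balancing you were reaching for. Write $r=\|x_0\|$. The leg $0\to x_0$ has length $r$ and the leg $x_0\to x_0/\|x_0\|$ has length $1-r$. The image of the first leg must travel at least $2-\epsilon$ along the sphere, and the image of the second at least $2$. Hence $Kr\ge 2-\epsilon$ and $K(1-r)\ge 2$, which add to $K\ge 4-\epsilon$. It is precisely the interior starting point, whose image lies on the opposite side from $R(x_0)$, that makes the two preimage lengths add up to $1$.
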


\begin{proof}
Let $R: B_{c_0} \to S_{c_0}$ be a $K$-Lipschitz retraction - it suffices to show $K \geq 4$. Fix $\epsilon > 0$. Note that by the decay of sequences in $c_0$, every element in this space achieves its (sup-)norm. Hence, if $P_N$ is the projection in $c_0$ onto the first $N$ coordinates, and $Q_N := id - P_N$, we may pick $N$ large enough that $||P_NR(0)|| = 1$, $||Q_NR(0)|| < \epsilon$. Let $E_N$ be the finite-dimensional subspace consisting of sequences supported in the first $N$ coordinates. Then consider the map $T: B_{E_N} \to B_{E_N}$ mapping $x \mapsto P_NR(x)$. This is continuous, hence by Brouwer has a root $x_0$, else $x \mapsto \frac{T(x)}{||T(x)||}$ would be a continuous retraction $B_{E_N} \to S_{E_N}$.
\newline
\newline
Now consider the path $\gamma: [0, 1] \to S_{c_0}$ mapping $t \mapsto R(t \cdot \frac{x_0}{||x_0||})$. We have:
\begin{align*}
    &||P_N\gamma(0)|| = 1, \quad ||Q_N\gamma(0)|| < \epsilon \\
    &||P_N\gamma(||x_0||)|| = 0, \quad ||Q_N\gamma(||x_0||)|| = 1 \\
    &||P_N\gamma(1)|| = 1, \quad ||Q_N\gamma(1)|| = 0
\end{align*}
By Lemma 2.4, this implies that the length of $\gamma$ is at least $4 - \epsilon$. But note that $||\gamma(t) - \gamma(t')|| = ||R(t \cdot \frac{x_0}{||x_0||}) - R(t' \cdot \frac{x_0}{||x_0||})|| \leq K||t \cdot \frac{x_0}{||x_0||} - t' \cdot \frac{x_0}{||x_0||}|| = K|t - t'|$, hence $\gamma$ is also $K$-Lipschitz, so its length is at most $K(1-0) = K$. Thus $K \geq 4 - \epsilon$, so by arbitrariness of $\epsilon$, $K \geq 4$, which completes the proof.
\end{proof}
\newpage

\section{Improved Upper Bounds}
\noindent In this section, we provide $3$ constructions of Lipschitz retractions, one in each of the spaces $c_0, l_1, l_\infty$, which have smaller Lipschitz constants than any such maps previously constructed. We will first make a definition:

\begin{definition}
For $x \in l_\infty$ and $r > 0$, define the coordinate-wise truncation $Q_r(x) \in rB_{l_\infty}$ by $Q_r(x) := ([x_1]_r, [x_2]_r, \cdots)$, where 
\[
[x_j]_r := \begin{cases}
    r & \text{if} \quad r \leq x_j \\
    x_j & \text{if} \quad -r \leq x_j \leq r \\
    -r & \text{if} \quad x_j \leq -r
\end{cases}
\]
\end{definition}
\noindent Observe that for fixed $r$, $|[x_j]_r - [y_j]_r| \leq |x_j - y_j|$ for all $j \in \mathbb{N}$, hence $||Q_r(x) - Q_r(y)||_\infty \leq ||x-y||_\infty$, so $Q_r$ is non-expansive (i.e. $1$-Lipschitz). In fact, we easily see that $||Q_s(x) - Q_r(y)|| \leq \max\{||x-y||, |s-r|\}$
\newline
\newline
We now turn to the improved upper bound for $k_0(l_\infty)$. In fact, this improvement follows from only a small modification to the method of Casini-Piasecki \cite{casini2017minimal}, and we do not claim any originality apart from changing the parameters in this approach. However, since the tweak yields a sizeable improvement of around $2.5$ to the upper bound, which seems not to be found in the literature, we have decided to rewrite the argument in full.

\begin{theorem}
$k_0(l_\infty) \leq 20.5$.
\end{theorem}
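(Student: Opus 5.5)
The plan is to follow the Casini--Piasecki scheme of \cite{casini2017minimal}: build the retraction as a composition of explicit, coordinatewise maps, each with an easily estimated Lipschitz constant, and keep the free parameters symbolic. The bound $12+2\sqrt{30}$ comes out of that scheme for one particular choice of parameters. I would redo the estimates as functions of the parameters and then choose the parameters differently to reach $20.5$. Throughout, the truncations $Q_r$ and the estimate
\[
||Q_s(x)-Q_r(y)||\leq \max\{||x-y||,|s-r|\}
\]
are the basic tools.

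\textbf{Step 1 (a map with no fixed points).} Define a map $T:B_{l_\infty}\to B_{l_\infty}$ of shift type, for instance
\[
T(x)=Q_1\bigl((1,x_1,x_2,\dots)\bigr)
\]
or a rescaled variant with a parameter $\alpha$. It should be Lipschitz with a small explicit constant (essentially $1$, by non-expansiveness of $Q_r$ and of the shift). It should also satisfy a quantitative displacement bound $||x-T(x)||\geq d(x)>0$. For $x$ near the sphere this follows from the shift structure, since $x$ cannot coincide with its own shift. Because the unilateral shift has no nonzero fixed points in the relevant set, $d(x)$ is bounded below in terms of how far the coordinates of $x$ are from $\pm1$.

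\textbf{Step 2 (push radially to the sphere).} Define
\[
R(x)=Q_1\bigl(x+\lambda(1-||x||)\,\phi(x)\bigr),
\]
or equivalently move along the direction $x-T(x)$ until the norm reaches $1$, with $\lambda$ a parameter. Here $\phi(x)$ is a normalised version of $x-T(x)$, obtained by dividing by $\max\{||x-T(x)||,\beta\}$ for a threshold parameter $\beta$. This gives three properties:
\begin{itemize}
\item $R(x)=x$ on $S_{l_\infty}$, because $1-||x||=0$ there and $Q_1$ fixes the unit ball;
\item $||R(x)||=1$ everywhere, which is what the choice of $\lambda$ and $\beta$ must guarantee: some coordinate must be pushed to $\pm 1$ once $(1-||x||)$ is large relative to $||x||$;
\item a Lipschitz bound $L(\lambda,\alpha,\beta)$, obtained by the product and quotient rules for Lipschitz maps, $1$-Lipschitz norms, and the truncation estimate.
\end{itemize}

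\textbf{Step 3 (optimise).} Minimise $L(\lambda,\alpha,\beta)$ subject to the constraint that forces $||R(x)||=1$. I expect the original choice to have balanced two regimes suboptimally:
\begin{itemize}
\item small $||x||$, where the normalisation by $\beta$ dominates;
\item $||x||$ near $1$, where the factor $\lambda(1-||x||)$ dominates.
\end{itemize}
Re-solving the one-variable optimisation should give a rational optimum of $20.5=41/2$.

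The main obstacle is Step 2: verifying that every point is actually sent onto the sphere, rather than merely into the ball, while keeping the Lipschitz estimate sharp. That verification is exactly where the parameter constraint comes from. I would first check it in the worst case, where $x$ is a constant sequence $x=(c,c,c,\dots)$ with $|c|<1$. There the shift fixes $x$ except in the first coordinate, so only that coordinate is available to be pushed to $\pm1$. The constraint extracted from this case is the one the optimisation in Step 3 must respect.
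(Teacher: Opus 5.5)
There is a genuine gap in Steps 1--2: your construction has no source of \emph{uniform} displacement, and without it nothing forces $\|R(x)\|=1$. The map $T(x)=(1,x_1,x_2,\dots)$ is nonexpansive. In general a $k$-Lipschitz self-map of $B_{l_\infty}$ has $\inf_x\|x-T(x)\|\le 1-\frac{1}{k}$: the fixed point $x_t$ of the contraction $tT$, $t<\frac{1}{k}$, satisfies $\|x_t-T(x_t)\|=(1-t)\|T(x_t)\|\le 1-t$. So any $T$ with Lipschitz constant ``essentially $1$'' has essentially no displacement. Your $T$ in fact fixes $(1,1,1,\dots)\in S_{l_\infty}$. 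Constant sequences, which do not exist in $c_0$, are exactly what defeats shift-type constructions in $l_\infty$. Dividing by $\max\{\|x-T(x)\|,\beta\}$ cannot create displacement, and Step 2 fails concretely:
\begin{itemize}
\item With $\phi$ parallel to $x-T(x)$, take $x=(c,c,c,\dots)$. Then $x-T(x)=(c-1,0,0,\dots)$, so $x+\lambda(1-c)\phi(x)=(c-s,c,c,\dots)$ with $0\le s\le\lambda(1-c)$. This has norm $<1$ whenever $\lambda(1-c)<1+c$, i.e.\ for all $c$ close to $1$, so $\|R(x)\|<1$.
\item With the opposite sign, take $x=(r,-r,r,-r,\dots)$ with $r$ close to $1$. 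Then $T(x)-x=(1-r,2r,-2r,2r,\dots)$, and every coordinate of $x+\lambda(1-r)\phi(x)$ stays in $(-1,1)$.
\item The variant that moves along $x-T(x)$ until the norm reaches $1$ does land on the sphere, but it is not Lipschitz. It sends $(c,c,c,\dots)$ to $(-1,c,c,\dots)$. It sends $(c,c,c,\dots)+(1-c)^2e_2$, at distance $(1-c)^2$, to $(0,1,c^2,c,c,\dots)$, at distance $1$ from the first image.
\end{itemize}
So the constraint you planned to extract in Step 3 cannot be met for this $T$. No Lipschitz bound is ever actually computed.

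The missing idea is to start from a self-map with Lipschitz constant well above $1$ and genuinely positive displacement, which is what the paper does, following Casini--Piasecki. It takes a $k$-Lipschitz $T:B_{l_\infty}\to B_{l_\infty}$ with $\|x-T(x)\|\ge\delta$ for all $x$. Bolibok's estimate $\psi(k)\ge 1-\frac{2}{k}$ for $k>2+\sqrt{2}$ allows $\delta$ arbitrarily close to $1-\frac{2}{k}$. The paper then extends $x\mapsto x-T(x)$ to a $(k+1)$-Lipschitz map $F$ on $\rho B_{l_\infty}$, using
\begin{itemize}
\item $x-T(Q_1(x))$ for $1\le\|x\|\le 1+\delta$;
\item $x-Q_{k(b-\|x\|)}(T(Q_1(x)))$ for $1+\delta\le\|x\|\le b:=1+\delta+\frac{1}{k}$;
\item $(k+1-\frac{k}{b}\|x\|)x$ for $b\le\|x\|\le\rho$, where $\rho$ is the larger root of $(k+1-\frac{k}{b}\rho)\rho=\delta$.
\end{itemize}
This $F$ satisfies $\|F\|\ge\delta$ everywhere and $F(\rho x)=\delta x$ for $\|x\|=1$. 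Hence $R(x)=Q_1(F(\rho x)/\delta)$ is a retraction onto $S_{l_\infty}$ with Lipschitz constant $\rho(k+1)/\delta$. The only optimisation is over $k$, trading the Lipschitz constant of $T$ against its displacement. The infimum over $k>2+\sqrt{2}$ is $20.449\ldots$, reached near $k\approx 4.6$, so $20.5$ is a rounded bound, not the rational optimum you predicted.
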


\begin{proof}
Let 
\[
\psi(k) := \sup_{\substack{T: B_{l_\infty} \to B_{l_\infty} \\ Lip(T) \leq k}} \inf_{x \in B_{l_\infty}} ||x - Tx||
\]
be the minimal displacement characteristic. It is well-known that for $k > 1$, $\psi(k) > 0$ (this holds for any Banach space, not just $l_\infty$), so let $k > 1$, and pick $0 < \delta < \psi(k)$ and a $k$-Lipschitz map $T: B_{l_\infty} \to B_{l_\infty}$ with $||x - Tx|| \geq \delta$ $\forall x \in B_{l_\infty}$. Define $b := 1 + \delta + \frac{1}{k}$, and let $\rho$ be the larger root of:
\[
(k+1-\frac{k}{b}\rho)\rho = \delta
\]
Then note that $\rho > b$ - indeed, this is because for $g(r) := (k+1-\frac{k}{b}r)r$, we have $g(b) = b > \delta$. Now define $F: \rho B_{l_\infty} \to B_{l_\infty}$ by
\[
F(x) := \begin{cases}
    x - Tx & \text{if} \quad ||x|| \leq 1 \\
    x - T(Q_1(x)) & \text{if} \quad 1 \leq ||x|| \leq 1 + \delta \\
    x - Q_{k(b-||x||)}(T(Q_1(x))) & \text{if} \quad 1+ \delta \leq ||x|| \leq b \\
    (k+1-\frac{k}{b}||x||)x & \text{if} \quad b \leq ||x|| \leq \rho
\end{cases}
\]
Note that this is well-defined - to see this, we check that the function is consistently defined at $||x|| \in \{1, 1+\delta, b\}$. If $||x|| = 1$, then $Q_1(x) = x$, which immediately yields consistency at $||x|| = 1$. If $||x|| = 1+\delta$, $k(b - ||x||) = k \cdot \frac{1}{k} = 1$, hence 
\[
 x - Q_{k(b-||x||)}(T(Q_1(x))) = x - Q_1(T(Q_1(x))) = x - T(Q_1(x)),
\]
because the codomain of $T$ is $B_{l_\infty}$, yielding consistency at $||x|| = 1+\delta$. Finally, if $||x|| = b$, then $k(b-||x||) = 0$, so 
\[
x - Q_{k(b-||x||)}(T(Q_1(x))) = x - Q_0(T(Q_1(x))) = x = (k+1-\frac{k}{b}||x||)x,
\]
which completes the verification of consistency.
\newline
\newline
Furthermore, from the fact that $T$ is $k$-Lipschitz and the bound $||Q_s(x) - Q_r(y)|| \leq \max\{||x-y||, |s-r|\}$, we have that $F$ is $(k+1)$-Lipschitz (we may verify this on each piece on which the piecewise function $F$ is defined and extend this to $F$ as a whole). Now, we prove that $||F(x)|| \geq \delta$ $\forall x \in \rho B_{l_\infty}$. For $||x|| \leq 1$, this follows immediately from the displacement property of $T$. For $||x|| \in [1, 1+\delta]$,
\[
||x - T(Q_1(x))|| \geq ||Q_1(x) - Q_1(T(Q_1(x)))|| = ||Q_1(x) - T(Q_1(x))|| \geq \delta,
\]
by non-expansiveness of $Q_1$ and the displacement property of $T$. For $||x|| \in [1+\delta, b]$, the vector being subtracted from $x$ has norm at most $1$, giving the desired lower bound, and for $||x|| \in [b, \rho]$, this follows from the fact that $(k+1-\frac{k}{b}||x||)||x|| \geq (k+1-\frac{k}{b} \rho)\rho = \delta$.
\newline
\newline
We may now construct the retraction $R: B_{l_\infty} \to S_{l_\infty}$ mapping $x \mapsto Q_1(\frac{F(\rho x)}{\delta})$. By the lower bound on $||F(x)||$, we know that $||\frac{F(\rho x)}{\delta}|| \geq 1$, hence $R(x)$ really does have norm $1$ (i.e. map into $S_{l_\infty}$) $\forall x \in B_{l_\infty}$. Furthermore, for $||x|| = 1$, $||\rho x|| = \rho$, hence $F(\rho x) = \rho x$. Hence $R(x) = x$, so $R$ does indeed fix the points of $S_{l_\infty}$, so is indeed a retraction.
\newline
\newline
Finally, $R(x)$ is clearly $\frac{\rho (k+1)}{\delta}$-Lipschitz. Using the quadratic formula, 
\[
\rho = \frac{b}{2k}\left[k+1 + \sqrt{(k+1)^2 - \frac{4k \delta}{b}}\right] = \frac{1 + \delta + \frac{1}{k}}{2k}\left[k+1 + \sqrt{(k+1)^2 - \frac{4k \delta}{1 + \delta + \frac{1}{k}}}\right]
\]
Bolibok \cite{bolibok2012minimal} proved that $\psi(k) \geq 1 - \frac{2}{k}$ for $k > 2 + \sqrt{2}$, so with this restriction on $k$, as $\delta \in (0, \psi(k))$ was arbitrary, we take $\delta = 1 - \frac{2}{k}$ in the above to obtain that $R$ is $\frac{(k+1)(k-1)}{2k(k-2)}\left[k+1 + \sqrt{\frac{k^3 - 3k^2 + 7k - 1}{k-1}}\right]$-Lipschitz. Hence $k_0(l_\infty)$ is bounded above by this quantity. As $k > 2 + \sqrt{2}$ was arbitrary, we thus have:
\begin{align*}
k_0(l_\infty) &\leq \inf_{k > 2 + \sqrt{2}} \frac{(k+1)(2k-1)}{2k(k-2)}\left[k+1 + \sqrt{\frac{2k^3 - k^2 + 8k - 1}{2k-1}}\right] \\ &= 20.4494\cdots < 20.5,
\end{align*}
which completes the proof.
\end{proof}

\noindent We now turn to $c_0$:

\begin{theorem}
    $k_0(c_0) \leq 6 + 4\sqrt{2}$.
\end{theorem}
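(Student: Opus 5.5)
The plan is to follow the scheme of the proof of Theorem 3.2, with one change: instead of an abstract map with positive minimal displacement, I would use an explicit shift-type map available in $c_0$. The three stages are as follows.

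\emph{Stage 1.} Construct a map $G$ on a ball $sB_{c_0}$ which is $L$-Lipschitz and satisfies $||G(x)|| \geq \delta$ everywhere, with $G(x) = x$ on the boundary sphere $sS_{c_0}$.

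\emph{Stage 2.} Set $R(x) := Q_1\big(G(sx)/\delta\big)$. Since $||G(sx)/\delta|| \geq 1$, the truncation $Q_1$ lands exactly on $S_{c_0}$. Because $Q_1$ maps $c_0$ into $c_0$ and is non-expansive, $R$ is an $\frac{sL}{\delta}$-Lipschitz retraction. It fixes the sphere once we arrange $\delta = s$ on the boundary, or normalise in the same way as in Theorem 3.2.

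\emph{Stage 3.} Optimise over the free parameters to obtain $6+4\sqrt{2} = 2(1+\sqrt{2})^2$.

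For Stage 1, I would take the building block to be a truncated weighted shift,
\[
T_\lambda(x) := Q_1\big((1, \lambda x_1, \lambda x_2, \dots)\big),
\]
which maps $B_{c_0}$ into $B_{c_0}$ and is $\lambda$-Lipschitz. The key step is a lower bound for the displacement, $\inf_{x \in B_{c_0}} ||x - T_\lambda x|| \geq \delta(\lambda)$ for $\lambda > 1$. I would argue as follows. Suppose $||x - T_\lambda x|| < \delta$. Then $x_1 > 1 - \delta$. Inductively, each coordinate satisfies $x_{n+1} \gtrsim \min(1, \lambda x_n) - \delta$, so the coordinates cannot decay to $0$ provided $\delta$ is small relative to $\lambda - 1$. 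This contradicts $x \in c_0$. I would aim for an explicit value such as $\delta(\lambda) = \frac{\lambda-1}{\lambda+1}$, or whatever the recursion gives sharply, because the final constant depends on it. The map $x - T_\lambda x$ is then $(\lambda+1)$-Lipschitz with norm at least $\delta$ on $B_{c_0}$.

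Next I extend it to a larger ball exactly as the function $F$ is built in Theorem 3.2. On a first shell, precompose with $Q_1$. On a second shell, shrink the subtracted vector using $Q_{k(b-||x||)}$. On the outer shell, interpolate radially with $(k+1 - \frac{k}{b}||x||)x$ out to radius $\rho$. Every piece uses only coordinatewise truncations and scalings, so it stays inside $c_0$. The Lipschitz constant $\lambda+1$ and the lower bound $\delta$ carry over verbatim, using the same checks that were made for $l_\infty$. This produces a retraction with constant $\frac{\rho(\lambda+1)}{\delta}$, where $\rho$ is given by the same quadratic formula with $\delta = \delta(\lambda)$.

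The main obstacle is twofold. First, the displacement estimate in Stage 1 must be sharp enough. Second, the three-layer extension may be wasteful in $c_0$, because there the radial shell can probably be merged with the truncation. My first attempt would therefore simplify the construction to
\[
R(x) = Q_1\!\left(\frac{\rho x - \phi(||x||)\, T_\lambda(Q_1(\rho x))}{\delta}\right),
\]
with $\phi$ a piecewise linear cutoff equal to $1$ for $||x|| \leq 1/\rho$ and $0$ at $||x|| = 1$. I would then minimise the resulting Lipschitz bound in $\lambda$ and the slope of $\phi$. A closed form involving $\sqrt{2}$ is expected from balancing two linear constraints, which should give the optimum $\lambda = 1+\sqrt{2}$ and the value $2(1+\sqrt{2})^2 = 6+4\sqrt{2}$. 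If the simplified version fails, I would fall back on the full Theorem 3.2 scheme with the explicit $\delta(\lambda)$ and optimise numerically to confirm the bound.
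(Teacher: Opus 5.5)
Your Stage 1 ingredient is the right one: a truncated weighted shift whose displacement is forced by non-decay in $c_0$ is exactly what the paper uses. The proposal nevertheless cannot reach $6+4\sqrt{2}$, because the Stage 2 normalisation is the wrong one.

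\emph{Stage 2 does not give a retraction.} The map $Q_1(\cdot/\delta)$ is not a retraction of an annulus onto $S_{c_0}$: for $\|x\|=1$ one has $Q_1(x/\delta)\neq x$ in general. So the inner map must carry the boundary sphere onto $\delta S_{c_0}$. Since $\delta<1\le s$, your option ``$\delta=s$'' is not available. This is exactly why Theorem 3.2 needs its radial shell, at the cost of the factor $\rho$. Your simplified formula ignores this. At $\|x\|=1$ it returns $Q_1(\rho x/\delta)$ with $\rho/\delta>1$; for example $e_1+\frac{\delta}{\rho}e_2\mapsto e_1+e_2$. So it is not a retraction.

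\emph{The fallback gives only the old bound.} The recursion gives exactly $\delta(\lambda)=\frac{\lambda-1}{\lambda}$: if $|x_n|>1-\delta\ge\frac{1}{\lambda}$ then $|x_{n+1}|>1-\delta$. This value is sharp for $T_\lambda$. Then $b=2$ and $\rho=\frac{\lambda+1+\sqrt{\lambda^2+3}}{\lambda}$. The resulting constant $\frac{(\lambda+1)(\lambda+1+\sqrt{\lambda^2+3})}{\lambda-1}$ is minimised at $\lambda=3$, with value $8+4\sqrt{3}=4(2+\sqrt{3})$. That is precisely the previously known bound. No other $T$ helps within that scheme, for two reasons. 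First, $\psi(k)\le 1-\frac{1}{k}$ in every Banach space (take approximate fixed points of the nonexpansive map $T/k$). Second, $\frac{\rho(k+1)}{\delta}$ is decreasing in $\delta$. Finally, the choice $\lambda=1+\sqrt{2}$ in your last step is read off from the target, not derived.

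\emph{What is missing.} The paper avoids the shell with two ideas that your plan lacks.
\begin{itemize}
\item The truncation level depends on $\|x\|$ and vanishes on the sphere: set $A_\lambda(x)=x-(q,[\lambda x_1]_q,[\lambda x_2]_q,\dots)$ with $q=\min\{\frac12,\lambda(1-\|x\|)\}$. This is a $(\lambda+1)$-Lipschitz map on $B_{c_0}$ itself, and it is the identity on $S_{c_0}$. Moreover $\|A_\lambda x\|\ge\frac{\lambda-1}{2\lambda}$ throughout. Where $q=\frac12$, your non-decay argument applies. Elsewhere, $\|A_\lambda x\|\ge\|x\|-\frac12\ge\frac{2\lambda-1}{2\lambda}-\frac12$.
\item Instead of $Q_1(\cdot/\delta)$, one uses a genuine retraction of the annulus $\{a\le\|z\|\le1\}$ onto $S_{c_0}$, namely $V_a(z)=z+Q_{1-\|z\|}\big(\frac{1-a}{a}z\big)$. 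It has the same Lipschitz cost $\frac{1}{a}$ (for $a\le\frac12$), but it fixes the sphere.
\end{itemize}
Then $R=V_a\circ Q_1\circ A_\lambda$ with $a=\frac{\lambda-1}{2\lambda}$ is $\frac{2\lambda(\lambda+1)}{\lambda-1}$-Lipschitz. Minimising over $\lambda$, at $\lambda=1+\sqrt{2}$, gives $6+4\sqrt{2}$.
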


\begin{proof}
We begin by fixing some $a \in (0, \frac{1}{2}]$ and considering the annulus $D_a := \{z \in B_{c_0}: a \leq ||z|| \leq 1\}$. Define $V_a : D_a \to S_{c_0}$ by $z \mapsto z + Q_{1-||z||}(\frac{1-a}{a}z)$. First note that, coordinate-wise, $z_j$ and $Q_{1-||z||}(\frac{1-a}{a}z)_j$ have the same sign $\forall z \in D_a$. Hence, 
\[
|V_a(z)_j| = |z_j| + \left|Q_{1-||z||}\left(\frac{1-a}{a}z\right)_j\right| = |z_j| + \min\{1 - ||z||, \frac{1-a}{a}|z_j|\}
\]
Now this yields $|V_a(z)_j| \leq |z_j| + 1 - ||z|| \leq 1$ for all $j$, i.e. $||V_a(z)|| \leq 1$, but also, for $j$ s.t. $|z_j| = ||z||$ (all sequences in $c_0$ achieve their norm), 
\[
\frac{1-a}{a}|z_j| = \frac{1-a}{a} ||z|| \geq 1-||z||,
\]
because $||z|| \geq a$, hence $|V_a(z)_j| = |z_j| + 1 - ||z|| = 1$. Thus we see that $V_a$ does in fact have codomain $S_{c_0}$. Furthermore, for $z \in S_{c_0}$, $1-||z|| = 0$, so $V_a(z) = z$. Thus $V_a$ is a retraction, and we have, as a direct consequence of the non-expansiveness of $Q_{1-||z||}$ that $Lip(V_a) \leq 1+ \frac{1-a}{a} = \frac{1}{a}$.
\newline
\newline
Now for $x \in B_{c_0}$, let $q_\lambda(r) := \min\{\frac{1}{2}, \lambda(1-r)\}$ for some parameter $\lambda > 1$ and take $F_\lambda(x) := (q_\lambda(||x||), [\lambda x_1]_{q_\lambda(||x||)}, [\lambda x_2]_{q_\lambda(||x||)}, \cdots) \in c_0$. Consider the map $A_\lambda(x) := x - F_\lambda(x)$. If $||x|| \geq \frac{2\lambda - 1}{2\lambda}$, so that $q_\lambda(||x||) = \lambda(1-||x||)$, we have:
\begin{align*}
||A_\lambda(x)|| \geq ||x|| - ||F_\lambda(x)|| &\geq \frac{2\lambda - 1}{2\lambda} - q_\lambda(||x||) \\ & \geq \frac{2\lambda - 1}{2\lambda} - \frac{1}{2}\\ &= \frac{\lambda - 1}{2\lambda}
\end{align*}
This also holds for $||x|| < \frac{2\lambda - 1}{2\lambda}$. To see this, note that in this case, $q_\lambda(||x||) = \frac{1}{2}$, and suppose for contradiction that $||A_\lambda(x)|| < \frac{\lambda - 1}{2\lambda}$. Then by considering the first term of the sequence $A_\lambda(x)$, we get $|x_1 - \frac{1}{2}| < \frac{\lambda - 1}{2\lambda}$, hence $|x_1| > \frac{1}{2\lambda}$. Thus $|\lambda x_1| > \frac{1}{2}$, so by considering the second term, either $|x_2 - \frac{1}{2}| < \frac{\lambda - 1}{2\lambda}$ or $|x_2 + \frac{1}{2}| < \frac{\lambda - 1}{2\lambda}$. Thus $|x_2| > \frac{1}{2\lambda}$, and iterating this, we obtain $|x_n| > \frac{1}{2\lambda}$ $\forall n \in \mathbb{N}$, contradicting $x \in c_0$.
\newline
\newline
Now this means that the map $Q_1 \circ A_\lambda$ maps into (i.e. has codomain) the annulus $D_{\frac{\lambda - 1}{2\lambda}}$. Furthermore, $q_\lambda$ and hence $F_\lambda$ are $\lambda$-Lipschitz, hence $A_\lambda$ and so $Q_1 \circ A_\lambda$ are $(\lambda+1)$-Lipschitz. Furthermore, if $||x|| = 1$, then $q_\lambda(||x||) = 0$, thus $F_\lambda(x) = 0$, and $A_\lambda(x) = x$. Thus $Q_1 \circ A_\lambda$ is a retraction. Finally we define the retraction $R := V_{\frac{\lambda - 1}{2\lambda}} \circ Q_1 \circ A_\lambda$. By all of the above, this maps $B_{c_0} \to S_{c_0}$ and is a retraction. Also, its Lipschitz constant is at most $\frac{2\lambda(\lambda+1)}{\lambda - 1}$.
\newline
\newline
As $\lambda > 1$ was arbitrary, this means that:
\begin{align*}
    k_0(c_0) \leq \inf_{\lambda > 1} \frac{2\lambda(\lambda+1)}{\lambda-1} = 6+4\sqrt{2},
\end{align*}
as required.
\end{proof}

\begin{remark}
The previously best-known upper bound was $4(2+\sqrt{3}) = 14.928\cdots$, and ours is $6+4\sqrt{2} = 11.656\cdots$.
\end{remark}

\noindent Finally, we turn to the case of $l_1$:

\begin{theorem}
    $k_0(l_1) \leq 4 + \sqrt{13} = 7.6055\cdots$.
\end{theorem}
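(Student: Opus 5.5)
The plan is to copy the architecture of the $c_0$ bound. We build a retraction $A_\lambda : B_{l_1} \to B_{l_1}$ that moves every point of the ball by a controlled amount, so that it fixes $S_{l_1}$, is Lipschitz with constant $L(\lambda)$, and satisfies $\|A_\lambda(x)\| \geq m(\lambda) > 0$ for all $x$. We then compose it with a retraction from the annulus $\{m \leq \|z\| \leq 1\}$ of $l_1$ onto $S_{l_1}$. For the second map I would use the radial projection $z \mapsto z/\|z\|$. In any normed space it satisfies
\[
\left\|\frac{z}{\|z\|} - \frac{w}{\|w\|}\right\| \leq \frac{2\|z-w\|}{\max\{\|z\|,\|w\|\}},
\]
so on the annulus it is $\frac{2}{m}$-Lipschitz and fixes the sphere. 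Since $l_1$ has no coordinatewise truncation giving the sharp constant $\frac{1}{a}$ that $V_a$ gave in $c_0$, I would first try to absorb part of the loss into the design of $A_\lambda$. The resulting constant is $\frac{2L(\lambda)}{m(\lambda)}$, minimised over $\lambda$. The target $4+\sqrt{13}$ is the larger root of $k^2 - 8k + 3 = 0$, which is what one expects from minimising a ratio of the form $\frac{(\lambda+\alpha)(\lambda+\beta)}{\lambda-1}$ or $\frac{c(\lambda+1)}{\text{linear}}$. I would tune the construction until it lands on this quadratic.

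For $A_\lambda$ I would use the $l_1$ analogue of the map $F_\lambda$ from the $c_0$ proof: a shift with a leading coordinate that absorbs the missing mass. Set $q_\lambda(r) := \min\{c, \lambda(1-r)\}$ and
\[
F_\lambda(x) := \bigl(q_\lambda(\|x\|), \theta x_1, \theta x_2, \dots\bigr),
\]
with $\theta$ chosen, possibly depending on $\|x\|$ through $q_\lambda$, so that $F_\lambda(x)$ has $l_1$-norm exactly $q_\lambda(\|x\|)$ or a fixed fraction of it. Then put $A_\lambda(x) := x - F_\lambda(x)$, renormalised if necessary to stay in the ball. On $S_{l_1}$ we have $q_\lambda = 0$, so $A_\lambda$ fixes the sphere, and $A_\lambda$ is $(1+\mathrm{Lip}(F_\lambda))$-Lipschitz with $\mathrm{Lip}(F_\lambda)$ of order $\lambda$.

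The lower bound on $\|A_\lambda(x)\|$ splits into two cases, as in the $c_0$ proof. When $\|x\|$ is close to $1$, the triangle inequality gives $\|A_\lambda(x)\| \geq \|x\| - q_\lambda(\|x\|)$. When $\|x\|$ is small, $q_\lambda = c$, and I would argue by contradiction, as before. If $\|x - F_\lambda(x)\|$ were small, then comparing coordinate $n$ of $x$ with coordinate $n-1$ of $\theta x$ forces $|x_n| \approx \theta |x_{n-1}|$, starting from $|x_1| \approx c$. In $l_1$ this does not contradict membership in the space outright, so the contradiction must come from summing: the tail $\sum_n |x_n|$ is comparable to the leading mass $c$ plus the accumulated errors. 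One obtains an inequality of the form $\|A_\lambda(x)\| \geq \text{(explicit)}\cdot\bigl(c - (1-\theta)\|x\|\bigr)$, and the two regimes are matched at the breakpoint $\|x\| = 1 - c/\lambda$.

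The main obstacle is this small-$\|x\|$ displacement estimate. There is real slack in how one trades the $\ell_1$ mass of the leading coordinate against the shifted tail, and the optimal choices of $c$ and $\theta$ are exactly what should produce the specific quadratic $k^2 - 8k + 3$. I would first try $c = \tfrac12$ and $\theta = 1$ (pure shift). Then I would compute $m(\lambda)$ and optimise $\frac{2(\lambda+1)}{m(\lambda)}$. If this falls short of $4+\sqrt{13}$, I would replace the radial projection by a map better adapted to $l_1$. One option is to push $z$ to the sphere by adding mass $1-\|z\|$ in a new leading coordinate, which is $2$-Lipschitz on the whole ball with no loss from $m$. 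The other is to scale the shifted part so that the constant $2$ in the radial estimate improves to $1 + \frac{1}{m}$ in the relevant regime. I would then re-optimise, expecting $\inf_\lambda$ to equal $4+\sqrt{13}$.
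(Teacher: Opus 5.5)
There is a genuine gap. You never fix a map, and the route you commit to, a first stage $A$ followed by radial projection and estimated by $\frac{2L}{m}$, cannot reach $4+\sqrt{13}$ however it is tuned.

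The first stage already fails to transfer from $c_0$. In $l_1$ one has $\|F_\lambda(x)\|=q_\lambda(\|x\|)+\theta\|x\|$, unlike the truncated $c_0$ version. With your trial $\theta=1$, $A_\lambda(x)=x-Sx$ on $S_{l_1}$. This is neither the identity nor bounded away from $0$ (spread $x$ evenly over $N$ coordinates). For constant $\theta<1$, the vector $c(1,\theta,\theta^2,\dots)$ is a zero of $A_\lambda$ whenever its norm $\frac{c}{1-\theta}$ is at most $1-\frac{c}{\lambda}$.

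The decisive problem is that $\frac{2L}{m}$ can never drop below $8$. Suppose $A$ is $L$-Lipschitz, fixes $S_{l_1}$ and satisfies $\|A(x)\|\geq m$; then $m\leq 1$. Run the Brouwer argument used for the lower bounds. Choose $N$ with $\|Q_NA(0)\|<\epsilon$, get $x_0\in B_{E_N}$ with $P_NA(x_0)=0$, and put $u:=x_0/\|x_0\|$, so that $A(u)=u\in E_N$. Since $\|v\|=\|P_Nv\|+\|Q_Nv\|$ in $l_1$,
\[
L\|x_0\|\geq\|A(x_0)-A(0)\|\geq 2m-2\epsilon,\qquad L(1-\|x_0\|)\geq\|A(u)-A(x_0)\|=1+\|A(x_0)\|\geq 1+m .
\]
Hence $L\geq 1+3m$ and $\frac{2L}{m}\geq 6+\frac{2}{m}\geq 8$, which is at best the old bound.

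Your first fallback, $z\mapsto(1-\|z\|)e_1+Sz$, is not a retraction at all: it sends $z\in S_{l_1}$ to $Sz\neq z$. Indeed, a $2$-Lipschitz retraction would contradict $k_0\geq 3$. Your second fallback is too vague to check. The target constant is reverse-engineered rather than derived.

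The missing idea is how to undo the shift near the sphere. The paper does use your leading-coordinate map, but only on a small ball and rescaled: $R(x)=(1-\frac{\|x\|}{a})e_1+\frac{1}{a}Sx$ for $\|x\|\leq a$, which is $\frac{2}{a}$-Lipschitz. On $a\leq\|x\|\leq 1$ it sets $R(x)=G(x)/\|x\|$ with $G(x)=C_{m(\|x\|)}x+S\bigl(x-C_{m(\|x\|)}x\bigr)$. Here $C_tx$ is the initial segment of $x$ carrying $l_1$-mass $t$, so the first $m(\|x\|)$ units of mass stay put and only the rest is shifted. With $m(r)=\frac{(r-a)(3r-a)}{4a}$ one has $m(a)=0$, so the two formulas agree at $\|x\|=a$. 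One also has $m(1)=1$, so $R$ is the identity on $S_{l_1}$, and this holds precisely because $a^2-8a+3=0$. Hence $a=4-\sqrt{13}$ and $K=\frac{3}{a}=4+\sqrt{13}$; no optimisation over a free parameter is involved.

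Nor is the Lipschitz bound a product of two constants. Since $\|G(x)\|=\|x\|$, a single-coordinate move raising the norm by $h$ gives $\|R(y)-R(x)\|\leq\frac{\|G(y)-G(x)\|+h}{\|y\|}$. The three positions of the moved coordinate relative to the cutoff then reduce to $\|R(y)-R(x)\|\leq Kh$. This uses the difference quotient $\mu=\frac{K}{4}(\|x\|+\|y\|)-1$ of $m$ and $\|x\|,\|y\|\geq a$. General pairs follow by changing one coordinate at a time in $c_{00}$ and then by density.
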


\begin{proof}
    Let $S: l_1 \to l_1$ be the shift map $(x_1, x_2, \cdots) \mapsto (0, x_1, x_2, \cdots)$. Also, for $0 \leq t < ||x||$, let $C_tx$ be the portion of $x \in l_1$ having $l_1$-mass exactly $t$. More precisely, if $j \in \mathbb{N}$ is the smallest index such that $\sum_{m=1}^j |x_m| > t$, then $(C_tx)_i = x_i$ for $i < j$, $(C_tx)_j = \operatorname{sign}(x_j) \cdot (t - \sum_{m=1}^{j-1} |x_m|)$, and $(C_tx)_i = 0$ for $i > j$. (As edge cases, we take $C_0(0) = 0$, and $C_{||x||}x = x$ $\forall x \in l_1$). Also, we let $(e_n)_{n \geq 1}$ denote the standard basis of $l_1$, as usual.
    \newline
    \newline
    Now let $a := 4 - \sqrt{13}$, so that $K := \frac{3}{a} = 4+\sqrt{13}$. Let $m(r) := \frac{(r-a)(3r-a)}{4a}$ for $r \in [a, 1]$, and observe that $m(r) \geq 0$ in this range. Also, $r - m(r) = \frac{(1-r)(3r+3-8a)}{4a} \geq 0$ (because $a^2 - 8a + 3 = 0$), so $m(r) \leq r$. Now, we define the retraction which we will show to have small Lipschitz constant. We define it for a dense set in $B_{l_1}$ - namely, for $x \in B_{l_1} \cap c_{00}$, let
    \[
    R(x) := \begin{cases}
        \left(1 - \frac{||x||}{a}\right)e_1 + \frac{1}{a}Sx \qquad \text{if $||x|| \leq a$} \\ \frac{C_{m(||x||)}x + S(x - C_{m(||x||)}x)}{||x||} \qquad \text{if $a \leq ||x|| \leq 1$}
    \end{cases}
    \]
    Now, we make a few observations. Firstly, if $||x|| \leq a$, then because $Sx$ is concentrated in $span\{e_2, e_3, \cdots\}$, we have $||R(x)|| = 1 -  \frac{||x||}{a} + \frac{1}{a}||Sx|| = 1 -  \frac{||x||}{a} + \frac{||x||}{a} = 1$. Secondly, for $||x|| \in [a, 1]$, $S(x - C_{m(||x||)}x)$ is concentrated on a set of coordinates disjoint from the support of $ C_{m(||x||)}x$, hence, as $S$ is an isometry, $||R(x)|| = \frac{||C_{m(||x||)}x|| + ||x - C_{m(||x||)}x||}{||x||} = \frac{m(||x||) + ||x|| - m(||x||)}{||x||} = 1$. Finally, if $||x|| = a$, then the first expression for $R$ yields $R(x) = \frac{Sx}{a}$, and because $m(a) = 0$, the second expression yields the same. Thus we have shown that $R$ is well-defined, and does indeed have codomain $S_{l_1}$. Note that continuity of $R$ is immediate from continuity of $m$. 
    \newline
    \newline
    We will show that $R$ is $K$-Lipschitz. First, if $||x||, ||y|| \leq a$, then 
    \[
    ||R(x) - R(y)|| \leq \left|\frac{||x||}{a} - \frac{||y||}{a}\right| + \frac{1}{a}||Sx-Sy|| \leq \frac{2}{a}||x-y|| \leq K||x-y||,
    \]
    so we need only deal with the case $||x||, ||y|| \geq a$ (note that the case $||x|| \leq a, ||y|| > a$ then follows by considering the point $x_* \in aB_{l_1}$ lying on the line between $x$ and $y$, and applying the Lipschitz bound from both cases to obtain
    \[
    ||R(x) - R(y)|| \leq ||R(x) - R(x_*)|| + ||R(x_*) - R(y)|| \leq K(||x-x_*|| + ||x_*-y||) = K||x-y||,
    \]
    the desired conclusion). Suppose from now $||x||, ||y|| \geq a$.
    \newline
    \newline
    Let $\varepsilon_i := \operatorname{sign}(x_i)$. We first deal with the scenario where $x, y$ differ in one coordinate $k$ only, and with $y_k = x_k + h \varepsilon_k$ ($h > 0$). Note that then $||y|| = ||x|| + h$. Now suppose also that the cutoff for $C_{m(||y||)}y$ occurs at the same coordinate $j$ as the cutoff for $C_{m(||x||)}x$. Then:
    \newline
    \begin{itemize}
        \item If $k < j$, the $k^{th}$ coordinates of $C_{m(||y||)}y$ and $C_{m(||x||)}x$ differ by $h$, and the $j^{th}$ coordinates differ by $|m(||y||) - m(||x||) - h|$. Further, the $(j+1)^{th}$ coordinates of $S(y - C_{m(||y||)}y)$ and $S(x - C_{m(||x||)}x)$ differ by the same amount. Taking $\mu := \frac{m(||y||) - m(||x||)}{||y|| - ||x||} = \frac{m(||y||) - m(||x||)}{h}$, this yields that $||G(y) - G(x)|| = h(1 + 2|\mu - 1|)$, where $G(x) := C_{m(||x||)}x + S(x - C_{m(||x||)}x)$.
        \newline
        \item If $k > j$, a similar argument (looking at the $j^{th}$ coordinates of $C_{m(||x||)}x$ and $C_{m(||y||)}y$ and the $(j+1)^{th}$ and $(k+1)^{th}$ coordinates of $S(x - C_{m(||x||)}x)$ and $S(y - C_{m(||y||)}y)$) yields that $||G(x) - G(y)|| = h(2\mu + 1)$.
        \newline
        \item If $k = j$, a similar argument yields $||G(x) - G(y)|| = h(\mu + |1-\mu|)$.
    \end{itemize}
    \noindent
    Now, since $R(x) = \frac{G(x)}{||x||}$, we have that $||y||(R(y) - R(x)) = G(y) - G(x) - hR(x)$. Hence, as $||R(x)|| = 1$, we have:
    \[
    ||R(y) - R(x)|| \leq \frac{||G(x) - G(y)|| + h}{||y||} = \frac{||G(x) - G(y)|| + h}{h} \cdot \frac{h}{||y||}
    \]
    We now look at the three cases above, and note that they respectively yield:
    \newline
    \begin{itemize}
        \item $||R(x) - R(y)|| \leq \frac{h}{||y||}(2 + 2|\mu - 1|)$. If $\mu \geq 1$, this is $\frac{2\mu h}{||y||}$. A direct calculation yields $\mu = \frac{K}{4}(||x|| + ||y||) - 1$, hence $2 + 2\mu = \frac{K}{2}(||x|| + ||y||) \leq K||y||$. So $\frac{2\mu h}{||y||} \leq \frac{h}{||y||} \cdot K||y|| = Kh$. As $h = ||x-y||$, we have $||R(x)-R(y)|| \leq K||x-y||$. If instead $\mu < 1$, we have $||R(x)-R(y)|| \leq \frac{h(4-2\mu)}{||y||}$. But 
        \[
        4-2\mu = 6 - \frac{K}{2}(||x|| + ||y||) = \frac{K}{2}(4a - ||x|| - 3||y||) + K||y|| \leq K||y||,
        \]
        as $||x||, ||y|| \geq a$, which again gives $||R(x) - R(y)|| \leq Kh$.
        \newline
        \item The second case yields 
        \[
        ||R(x) - R(y)|| \leq \frac{h}{||y||}(2\mu + 2) \leq \frac{h}{||y||}\cdot K||y|| = Kh,
        \]
        where the second inequality comes from the estimate for $2\mu + 2$ in the previous bullet point.
        \newline
        \item The third case yields $||R(x) - R(y)|| \leq \frac{h}{||y||} (1+\mu + |1-\mu|)$. If $\mu < 1$, $1+\mu + |1-\mu| = 2 \leq 3 = Ka \leq K||y||$, else $1+\mu + |1-\mu| = 2\mu \leq K||y||$ as before. Either way, $||R(x) - R(y)|| \leq Kh = K||x-y||$ once more.
    \end{itemize}
    In total, we have now shown that the desired Lipschitz bound holds for $||x||, ||y|| \geq a$ whenever $x, y$ differ in one coordinate, by a small enough amount that the cutoff coordinates for $C_{m(||x||)}x$ and $C_{m(||y||)}y$ are the same. We may now extend this to any two $x, y \in B_{l_1} \cap c_{00}$ (with $||x||, ||y|| \geq a$) that differ in one coordinate (by any amount) as follows: consider $R(z)$ for $z$ varying along the line between $x$ and $y$. We may split this line into 3 regions: one where $||z|| \geq a$ and decreases as we traverse the line, one where $||z|| < a$, and one where $||z|| \geq a$ and increases as we traverse the line (some of these regions may be empty). Along the first and third regions, since  $||z||$ and hence $m(||z||)$ vary monotonically, and since $z$ is finitely supported along the line, we observe that the cutoff coordinate changes only finitely many times. Thus, we may apply the above argument piecewise as $z$ traverses the first and third regions of the line, and apply the previously derived Lipschitz bound for $R$ in the second region when $||z||$ is small, incurring only arbitrarily small losses at the finitely many boundary points by continuity of $R$, to obtain that $||R(x) - R(y)|| \leq K||x-y||$, as desired. Note that this piecewise argument works because for any finite collection $0 = t_0 < t_1 < \cdots < t_k = 1$, we have
    \[
    \sum_{i = 1}^k ||(x + t_i(y-x)) - (x + t_{i-1}(y-x))||  = \sum_{i = 1}^k (t_i - t_{i-1})||y-x|| = ||y-x||
    \]
    Next, we extend this Lipschitz bound to all of $B_{l_1} \cap c_{00}$ by remarking that, for any two $x, y \in B_{l_1} \cap c_{00}$, we may continuously transform $x$ into $y$ by varying each coordinate of $x$ until it becomes the corresponding coordinate of $y$, one coordinate at a time for each coordinate where they differ, and applying the above result finitely often (again because both $x$ and $y$ are finitely supported). Lastly, we may extend this map $R$ to a map $R^*$ on all of $B_{l_1}$ by density of $c_{00}$ in $l_1$. This map preserves all properties of $R$ - in particular, $R^*$ is a $K$-Lipschitz retraction on $B_{l_1}$, which completes the proof.
\end{proof}

\section{Disclosure of AI usage}
\noindent OpenAI's ChatGPT 5.6 generated the main ideas and constructions for the proofs in this paper; the author checked, revised and rewrote these proofs, and takes full responsibility for the correctness and presentation of the results.
\newpage
\bibliographystyle{plain}
\bibliography{refs}

\end{document}